\renewcommand{\subset}{\subseteq}
\renewcommand{\leq}{\leqslant}
\renewcommand{\geq}{\geqslant}
\renewcommand{\le}{\leq}
\renewcommand{\ge}{\geq}
\renewcommand{\setminus}{-}
\newcommand{\Oh}{{\cal O}}
\newcommand{\tup}{\bar}
\def\expandafter\tikz@node@finish\expandafter{\expandafter\endgroup\expandafter\endpgfonlayer\tikz@node@finish}%
\setlist{nosep}
\newif\ifcomment
\newcommand{\type}{\mathsf{type}}
\newcommand{\ST}{$S$--$T$\xspace}
\renewcommand{\SS}{$S$--$S$\xspace}
\newcommand{\RR}{$R$--$R$\xspace}
\newcommand{\TT}{$T$--$T$\xspace}
\newcommand{\EP}{Erdős--Pósa\xspace}
\title{Half-integral Erd\H{o}s-P\'{o}sa property for non-null $S$--$T$ paths\thanks{%
  This work was initiated during the workshop STWOR 2023 in Będlewo, Poland, which was a part of STRUG: Stuctural Graph Theory Bootcamp, funded by the ``Excellence initiative – research university (2020-2026)'' of the University of Warsaw.
  Vera Chekan's research was supported by the DFG Research Training Group 2434 ``Facets of Complexity.''
  Colin Geniet's research was supported by the ANR projects DIGRAPHS (ANR-19-CE48-0013-01) and TWIN-WIDTH (ANR-21-CE48-0014-01).
  Meike Hatzel's research was supported by the Federal Ministry of Education and Research (BMBF), by a fellowship within the IFI programme of the German Academic Exchange Service (DAAD) and by the Institute for Basic Science (IBS-R029-C1).
  Micha\l{} Pilipczuk and Marek Sokołowski's research was supported by the project BOBR that has received funding from the European Research Council (ERC) under the European Union’s Horizon 2020 research and innovation programme with grant agreement No. 948057.
  Michał Seweryn's research was supported by a PDR grant from the Belgian National Fund for Scientific Research (FNRS).
  }}
\DeclareRobustCommand{\authorthing}{
Vera Chekan\thanks{Humboldt-Universität zu Berlin, Germany. \url{vera.chekan@informatik.hu-berlin.de}}
\and
Colin Geniet\thanks{Univ. Lyon, CNRS, ENS de Lyon, UCBL, LIP UMR5668, France. \url{colin.geniet@ens-lyon.fr}}
\and
Meike Hatzel\thanks{Discrete Mathematics Group, Institute for Basic Science (IBS), Daejeon, South Korea.
\url{research@meikehatzel.com}}
\and
Michał Pilipczuk\thanks{Institute of Informatics, University of Warsaw, Poland. \url{michal.pilipczuk@mimuw.edu.pl}}
\and
Marek Sokołowski\thanks{Institute of Informatics, University of Warsaw, Poland. \url{marek.sokolowski@mimuw.edu.pl}}
\and
Michał T.\ Seweryn\thanks{Computer Science Department, Université libre de Bruxelles, Brussels, Belgium. \url{michal.seweryn@ulb.be}}
\and
Marcin Witkowski\thanks{Faculty of Mathematics and Computer Science, Adam Mickiewicz University, Pozna\'n, Poland. \url{mw@amu.edu.pl}}}
\author{\authorthing}
\date{}
\begin{document}

\maketitle
\begin{textblock}{20}(-2.1,6)
   \includegraphics[width=80px]{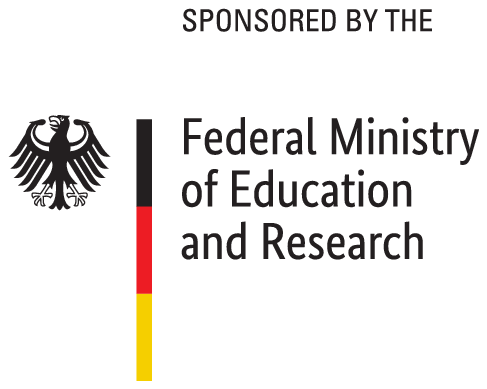}
\end{textblock}
\begin{textblock}{20}(-2.1,7.1)
 \includegraphics[width=60px]{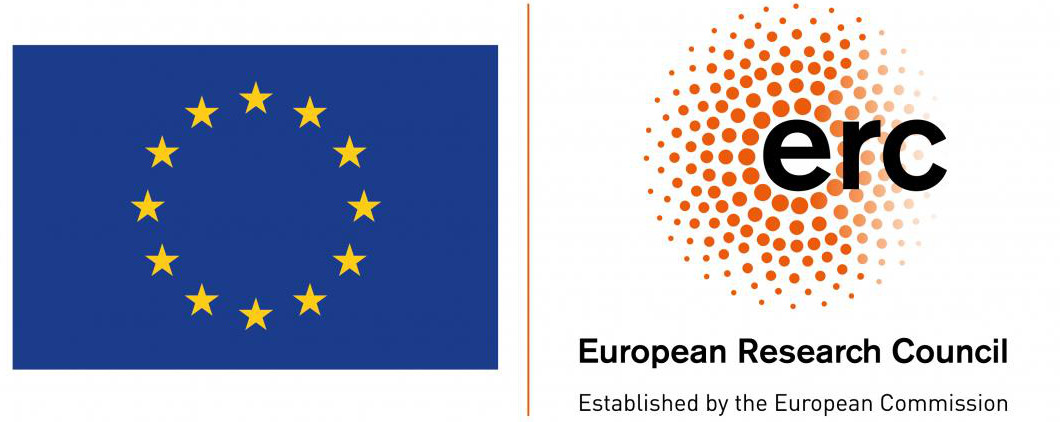}
\end{textblock}

\begin{abstract}
    For a group $\Gamma$, a {\em{$\Gamma$-labelled graph}} is an undirected graph $G$ where every orientation of an edge is assigned an element of $\Gamma$ so that opposite orientations of the same edge are assigned inverse elements. A path in $G$ is {\em{non-null}} if the product of the labels along the path is not the neutral element of $\Gamma$.

    We prove that for every finite group $\Gamma$, non-null $S$--$T$ paths in $\Gamma$-labelled graphs exhibit the half-integral Erd\H{o}s-P\'osa property.
    More precisely, there is a function $f$, depending on $\Gamma$, such that for every $\Gamma$-labelled graph $G$, subsets of vertices $S$ and $T$, and integer $k$, one of the following objects~exists:
	\begin{itemize}[nosep]
	 \item a family $\cal F$ consisting of $k$ non-null $S$--$T$ paths in $G$ such that every vertex of $G$ participates in at most two paths of $\cal F$; or
	 \item a set $X$ consisting of at most $f(k)$ vertices that meets every non-null $S$--$T$ path in $G$.
	\end{itemize}
	This in particular proves that in undirected graphs $S$--$T$ paths of odd length have the half-integral Erd\H{o}s-P\'osa~property.
\end{abstract}

\newpage

\section{Introduction}

A classic theorem of Erd\H{o}s and P\'osa~\cite{ErdosPosa65} is the following: For every undirected graph $G$ and integer~$k$, one can find in $G$ a set of $k$ vertex-disjoint cycles, or a set of $f(k)$ vertices that meets all the cycles, where $f(k)\in \Oh(k\log k)$. Since the establishment of this result in 1965, the existence of this kind of a functional relation between the (maximum) packing number and the (minimum) hitting number has been investigated for a wide range of families of objects in graphs. Families that enjoy such a relation are said to satisfy the {\em{\EP property}}. We refer to the dynamic listing maintained by Raymond~\cite{listing} for a comprehensive overview of this research area.

A natural idea in this context is to consider objects rooted at some set(s) of vertices. For instance, Menger's Theorem can be interpreted as follows: \ST paths\footnote{An {\em{\ST path}} is a path with one endpoint in $S$ and the other in $T$. The internal vertices are also allowed to belong to~$S$ or~$T$.} in graphs enjoy the \EP property with $f(k)=k-1$. Another interesting setting is when one puts restrictions on the lengths of considered paths or cycles. For example, Reed~\cite{bruce1999mangoes} proved that while odd cycles in undirected graphs do not have the \EP property, they enjoy the {\em{half-integral \EP property}}: In every undirected graph $G$ one can find a family $\cal F$ consisting of $k$ odd cycles such that every vertex participates in at most two cycles from~$\cal F$, or there is a set of at most $f(k)$ vertices that meets all odd cycles in $G$.

\newcommand{\vE}{\vec E}

In this work, we consider the natural common generalization of the two examples described above. While our original goal was to study odd \ST paths, our proofs work in the more general setting of group-labelled graphs, which we now recall. For an undirected multigraph $G$, by $V(G)$ and $E(G)$ we denote the vertex set and the edge set of $G$, respectively. Then, the set of {\em{arcs}} in $G$ is $\vE(G)=\{(u,v),(v,u)\colon uv\in E(G)\}$; that is, for every edge $e=uv$ of $G$ we include its two {\em{orientations}} $(u,v)$ and $(v,u)$. For every arc $a$, the reverse arc originating from orienting the same edge in the other direction is denoted by $a^{-1}$. Next, for a group $\Gamma$, a {\em{$\Gamma$-labelled graph}} is an undirected graph $G$ together with a labelling $\lambda_G\colon \vE(G)\to \Gamma$ satisfying $\lambda_G(a^{-1})=\lambda_G(a)^{-1}$ for every arc $a\in \vE(G)$. We omit the subscript if the graph is clear from the context. A {\em{path}} in a $\Gamma$-labelled graph $G$ is a sequence $P=(u_0,a_1,u_1,a_2,u_2,\ldots,u_{p-1},a_p,u_p)$ consisting alternately of vertices and arcs so that each arc $a_i$, $i\in \{1,\ldots,p\}$, has tail $u_{i-1}$ and head $u_i$, and the vertices $u_0,u_1,\ldots,u_p$ are pairwise different. We define the {\em{value}} of $P$ as the product of the labels along $P$:
$$\lambda(P)=\lambda(a_1)\cdot \lambda(a_2)\cdot \ldots \cdot \lambda(a_p),$$
where $\cdot$ denotes the group operation in $\Gamma$. Then $P$ is {\em{non-null}} if $\lambda(P)\neq 1_\Gamma$, where $1_\Gamma$ is the neutral element of $\Gamma$. Clearly, $P$ is non-null if and only if its reversal $P^{-1}=(u_p,a_p^{-1},u_{p-1},\ldots,u_1,a_1^{-1},u_0)$ is non-null, for $\lambda(P^{-1})=\lambda(P)^{-1}$. Hence, we can speak about non-null unoriented paths. Observe that when~$\Gamma = \ZZ / 2\ZZ$ and every arc is given value~$1$, the non-null paths are exactly paths of odd length.

It is not difficult to convince oneself that odd \ST paths in undirected graphs do not enjoy the \EP property: a counterexample is presented in \cref{fig:odd-grid}. The main result of this work is that once we relax the question by allowing half-integrality, the answer becomes positive.

\begin{restatable}{theorem}{mainTheorem}\label{thm:main}
 For every finite group $\Gamma$ there exists a function $f\colon \N\to \N$ such that the following holds. Let $G$ be a $\Gamma$-labelled graph, $S$ and $T$ be vertex subsets in $G$, and $k$ be an integer. Then $G$ contains at least one of the following objects:
 \begin{itemize}
    \item A family $\cal F$ consisting of $k$ non-null \ST paths such that every vertex of $G$ participates in at most two paths of $\cal F$.
    \item A set~$X$ consisting of at most $f(k)$ vertices such that~$X$ meets every non-null \ST path.
 \end{itemize}
\end{restatable}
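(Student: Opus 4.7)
The plan is to first reduce to the case where $S = \{s\}$ and $T = \{t\}$ are singletons, by adding two auxiliary vertices $s,t$ joined respectively to every vertex of $S$ and of $T$ by edges with label $1_\Gamma$, so that non-null \ST paths in $G$ are in natural bijection with non-null $s$-$t$ paths in the augmented graph. A tempting next step would be to reduce further to the half-integral \EP property for non-null cycles (Huynh--Joos--Wollan), by adding an $st$-edge with some label and converting $s$-$t$ paths to cycles through this edge. But this reduction is delicate: cycles avoiding the new edge can be non-null without corresponding to any $s$-$t$ path, so hitting all non-null cycles is strictly stronger than what we need. I would therefore proceed with a direct structural argument tailored to $s$-$t$ paths.

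The core would be a dichotomy: either $G$ admits a small hitting set for non-null $s$-$t$ paths, or it contains a substructure from which $k$ half-disjoint such paths can be extracted. The natural split is by treewidth. In the bounded-treewidth regime, the types of partial $s$-$t$ subwalks meeting any bag of width $w$ are parametrised by $|\Gamma|^{\Oh(w)}$ many possibilities (endpoint pair in the bag together with the current partial group value), so a bottom-up dynamic programming argument on a tree decomposition should yield either the desired half-integral packing or a small blocker. In the large-treewidth regime, one invokes a wall-minor theorem and uses Menger-type linkages to route $s$ and $t$ into the wall, reducing to the task of producing many half-disjoint non-null $s$-$t$ paths through a large $\Gamma$-labelled wall to which $s$ and $t$ are already linked.

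The wall case is the crux. The relevant subdichotomy is whether the labelling of the wall is \emph{balanced} (every closed walk has value $1_\Gamma$, equivalently the labelling is a coboundary of a vertex potential) or \emph{unbalanced}. In the balanced subcase one may gauge-shift the wall labels to $1_\Gamma$: then the value of any $s$-$t$ path through the wall depends only on the two ``tails'' joining $s$ and $t$ to the wall, and one should show that either suitable pairs of tails yield $k$ half-disjoint non-null paths, or a potential-based argument produces a bounded hitting set. In the unbalanced subcase, apply the Huynh--Joos--Wollan half-integral \EP theorem for non-null cycles inside the wall to find many half-disjoint non-null cycles, and concatenate each with the $s$-to-wall and wall-to-$t$ linkages to obtain the desired family of half-disjoint non-null $s$-$t$ paths.

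The main obstacle is this final concatenation step: one must simultaneously guarantee that each constructed walk is a simple path, that its group value is non-trivial, and that the whole family respects the half-integral disjointness constraint. The first two conditions can clash: a detour used to avoid creating a chord can change the group value to $1_\Gamma$. I would expect a Ramsey-type pigeonhole within the wall, isolating a subwall on which the relevant linkage values are constant in $\Gamma$, to be the technically demanding step, together with iterating the argument over the $|\Gamma|$ possible ``label classes'' of candidate paths. These iterations would likely be responsible for the (presumably tower-type) dependence of $f$ on $k$ and $|\Gamma|$.
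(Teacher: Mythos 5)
Your outline follows the Reed-style ``treewidth vs.\ wall'' dichotomy and is therefore genuinely different from what the paper does. The paper avoids walls, tangles, and the grid-minor theorem entirely: it proves the unbreakable case directly using the integral Erd\H{o}s--P\'osa theorem of Chudnovsky et al.\ for non-null $S$--$S$ paths (routing the resulting $S$--$S$ paths to $T$ to build ``tripods''), and then reduces the general graph to the unbreakable case by recursively replacing one side of a low-order separation with a bounded-size gadget of the same combinatorial ``type.'' The paper explicitly positions this as a departure from the graph-minors route you are proposing. Your approach, if it could be carried out, would likely make contact with the existing wall machinery for odd/non-null cycles and might be informative in its own right, but the paper's route is arguably simpler precisely because it never has to negotiate walls at all, and only needs the (easier, integral) \cref{thm:non-null-S-paths-EP} rather than the half-integral non-null-cycle theorem.

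Two concrete problems with your write-up. First, the opening reduction to singletons is broken for packings: if you add a single apex $s$ adjacent to all of $S$, then every $s$--$t$ path uses $s$, so no packing of more than two such paths can have congestion $2$. You would need to either exempt $s,t$ from the congestion count, or split $s$ into many copies, and either fix propagates back into the rest of the argument. Second, and more importantly, the step you yourself flag as ``the main obstacle''---concatenating a half-integral family of non-null cycles in the wall with $s$-- and $t$--linkages so as to obtain simple, non-null, half-integrally disjoint $s$--$t$ paths---is precisely where the difficulty lives, and ``Ramsey-type pigeonhole over $|\Gamma|$ label classes'' is a slogan rather than an argument. This is the part that the odd-$S$-cycle and non-null-$S$-cycle papers spend most of their technical effort on, and it is not obvious that their machinery transfers to the $S$--$T$ (two-sided) situation: the example in \cref{fig:odd-grid} is exactly designed to show that the $S$--$T$ case behaves differently from the $S$--$S$ case near a wall. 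Likewise, the bounded-treewidth half of your dichotomy is not just ``a DP'': dynamic programming computes packing and hitting numbers but does not by itself bound one in terms of the other; one needs a structural induction on small separators (which is in effect what the paper's gadget-replacement argument carries out, without ever mentioning treewidth).
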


In particular, \cref{thm:main} implies that odd \ST paths in undirected graphs exhibit the half-integral \EP property.

Let us briefly discuss the approach we employ in the proof of \cref{thm:main}. The idea is to first prove the result assuming the graph $G$ is highly connected, and then reduce the general case to the highly connected case. The concept of being ``highly connected'' is formalized through the notion of {\em{unbreakability}}, extensively used in parameterized algorithms; see for instance~\cite{ChitnisCHPP16,CyganLPPS19,KawarabayashiT11,LokshtanovR0Z18}\footnote{The notion of unbreakability was first defined explicitly in the work of Cygan, Lokshtanov, Pilipczuk, Pilipczuk, and Saurabh~\cite{CyganLPPS19}, but it appears implicitly also in the earlier works.}. Roughly speaking, a graph $G$ is $(q,k)$-unbreakable if for every separation of $G$ of order less than $k$, one of the sides has fewer than $q$ vertices; or contrapositively by Menger's theorem, any two sets consisting of $q$ vertices each can be connected by $k$ vertex-disjoint~paths.

We first prove (\cref{thm:unbreakable-EP} in \cref{sec:unbreakable}) that assuming $G$ is $(q,k)$-unbreakable, we can always find a congestion-$2$ packing\footnote{A {\em{congestion-$c$ packing}} of paths is a family of paths such that every vertex participates in at most $c$ of those paths.}
consisting of $k$ non-null \ST paths, or a hitting set for such paths of size at most $4q+2k-6$. For this, we use a result of
Chudnovsky, Geelen, Gerards, Goddyn, Lohman, and Seymour~\cite{chudnovsky2006packing} to either find the desired hitting set, or construct a family of $q$ disjoint non-null \SS paths. These \SS paths can be consequently linked to $T$ using $k$ disjoint paths using the assumption about unbreakability. This creates a congestion-$2$ packing of $k$ ``tripods'', where each tripod consists of a non-null \SS path $P$ that is connected to $T$ by a path $Q$ that shares only an endpoint with $P$. It is then easy to observe that each tripod contains a non-null \ST path, giving us the desired congestion-$2$ packing of $k$ non-null \ST~paths.

For the reduction to the unbreakable case, we use another technique borrowed from the area of parameterized algorithms: {\em{gadget replacement}}. Namely, suppose the given graph $G$ is not $(q,k)$-unbreakable for some large $q$ depending on $k$, and let $(A,B)$ be a witnessing separation: $A$ and $B$ are vertex subsets with $A\cup B=V(G)$, $|A|\geq q$, $|B|\geq q$, $|A\cap B|<k$, and no edges between $A\setminus B$ and $B\setminus A$. If both $G[A\setminus B]$ and $G[B\setminus A]$ contain a non-null \ST path, then the half-integral packing numbers in those subgraphs must be strictly smaller than in $G$, and we can induct: find a bounded-size hitting set $X_A$ in $G[A\setminus B]$ and a bounded-size hitting set $X_B$ in $G[B\setminus A]$, and return $X_A\cup X_B\cup (A\cap B)$ as a hitting set. Otherwise, one of those induced subgraphs, say $G[B\setminus A]$, contains no non-null \ST path. We then prove that provided $q$ is large enough depending on $k$, $G[B]$ can be replaced with a strictly smaller graph of the same ``type'', so that the graph $G'$ obtained from $G$ by applying the replacement has exactly the same congestion-$2$ packing number and the same hitting number for non-null \ST paths. This allows us to apply induction again, this time on the number of vertices of the considered graph. The precise definition of the ``type'' of $G[B]$ and the argument for the correctness of the replacement are quite technical and delicate.

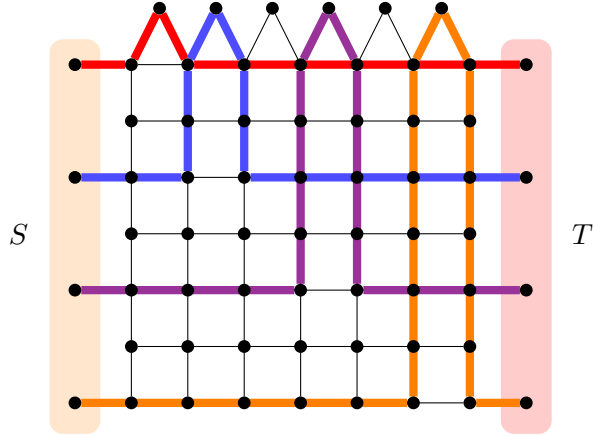
\begin{figure}[!ht]
  \begin{center}
    \begin{tikzpicture}[scale=0.75]
      \def\xx{7}
      \def\yy{7}

      \pgfdeclarelayer{fg}
      \pgfsetlayers{main,fg}

      \tikzstyle{vertex}+=[inner sep=.5mm]

      \fill[orange!20,rounded corners=5] (-0.45,0.45) rectangle (0.45,\yy+0.45);
      \fill[ red!20,rounded corners=5] (\xx+0.55,0.45) rectangle (\xx+1.45,\yy+0.45);
      \node at (-1,\yy/2+0.5) {$S$};
      \node at (\xx+2,\yy/2+0.5) {$T$};

      \begin{pgfonlayer}{fg}
      \foreach \x in {1,...,\xx}{
        \foreach \y in {1,...,\yy}{
          \node[vertex] (\x\y) at (\x,\y) {};
        }
      }
      \end{pgfonlayer}

      \foreach \x in {1,...,\xx}{
        \draw (\x1) -- (\x\yy);
      }
      \foreach \y in {1,...,\yy}{
        \draw (1\y) -- (\xx\y);
      }
      \foreach \y in {1,3,...,\yy}{
        \node[vertex] (s\y) at (0,\y) {};
        \node[vertex] (t\y) at (\xx+1,\y) {};
        \draw (s\y) -- (1\y);
        \draw (t\y) -- (\xx\y);
      }
      \foreach \x in {2,...,\xx}{
        \node[vertex] (v\x) at (\x-0.5,\yy+1) {};
        \draw (\x\yy) -- (v\x) -- (\x-1,\yy);
      }

      \foreach \py/\px/\col in {7/1/red,5/2/blue!70!white,3/4/violet!80!white,1/6/orange}{
        \pgfmathtruncatemacro{\pxx}{\px+1}
        \draw[line width=1.1mm,\col] (s\py) -- (\px\py) -- (\px\yy) -- (v\pxx) -- (\pxx\yy) -- (\pxx\py) -- (t\py);
      }
    \end{tikzpicture}
  \end{center}
  \caption{
    A construction showing that
    odd \ST paths do not exhibit the \EP property. The graph is obtained from the $(2n+1)\times (2n+1)$ grid by attaching a degree-$1$ vertex of $S$ to every second vertex of the left side, a degree-$1$ vertex of $T$ to every second vertex of the right side, and a triangle to every edge of the top side. On one hand, every vertex subset $X$ of size smaller than $n$ avoids at least one row containing vertices of $S$ and $T$, as well as two consecutive columns together with the triangle joining them; hence there is an odd \ST path not meeting $X$. On the other hand, every odd \ST path has to visit the top side of the grid, hence there are no two disjoint odd \ST paths.
    There is, however, a large congestion-$2$ packing of odd \ST paths, highlighted through colors. The example is a slightly adapted construction from the work of Bruhn, Henlein, and Joos~\cite{henning2018frames}, which in turn is inspired by the {\em{Escher wall}} of Lov\'asz and Schrijver, see the work of Reed~\cite{bruce1999mangoes}.
  }
  \label{fig:odd-grid}
\end{figure}

\paragraph*{Related work.} There is extensive literature on \EP-type statements in group-labelled graphs.
Chudnovsky, Geelen, Gerards, Goddyn, Lohman, and Seymour~\cite{chudnovsky2006packing} proved that non-null \SS paths in group-labelled graphs admit the \EP property, generalizing an earlier result on odd \SS paths~\cite{geelen2009oddminor}. Note that the example of \cref{fig:odd-grid} shows that in these results, it is important that we consider \SS paths and not \ST paths; that is, both endpoints of every path are required to belong to the same vertex subset~$S$. Huynh, Joos, and Wollan~\cite{2019unifiedEP} proved that non-null $S$-cycles\footnote{An {\em{$S$-cycle}} is a cycle that intersects $S$, and it is {\em{non-null}} in a $\Gamma$-labelled graph if the product of the labels along the cycle is not equal to $1_\Gamma$. It is easy to see that this definition does not depend on the choice of the starting point on the cycle.} in group-labelled graphs enjoy the half-integral \EP property, which generalized earlier results for $S$-cycles~\cite{KakimuraKM11,PontecorviW12} (here, even the integral \EP property holds) and for odd $S$-cycles~\cite{KakimuraK13}. In fact, Huynh et al.~proved a more general statement that involves labelling arcs with elements of the direct product of two groups, and considering a cycle to be non-null if its value has non-neutral elements on both coordinates. More corollaries can be derived from this formulation; see~\cite{2019unifiedEP} for details. A proof of the half-integral \EP property for non-null cycles in group-labelled graphs (not necessarily passing through a prescribed set of vertices) was also reported by Lokshtanov, Ramanujan, and Saurabh~\cite{LokshtanovRS17}. Finally, Gollin, Hendrey, Kawarabayashi, Kwon, and Oum~\cite{2024grouplabelled} studied half-integral \EP statements for cycles in graphs with edges labelled with multiple Abelian groups. However, they define group-labelled graphs somewhat differently: they restrict attention to Abelian groups, and every edge $e$ traversed by a path $P$ contributes with the same value $\lambda(e)$ to the value of $P$, regardless in which direction $e$ is traversed along $P$.

We remark that in all the works mentioned above, the proofs either exploit tight duality statements {\em{\`a~la}} Tutte-Berge formula or Mader's $A$-path Theorem, or follow the general approach paved by Reed in~\cite{bruce1999mangoes} that relies on tools from the theory of Graph Minors, particularly the duality between treewidth and walls. The way we induct on small-order separations with non-null \ST paths on both sides is essentially the same as in multiple previous works, see e.g.~\cite[3.3]{bruce1999mangoes}. However, the methodology of using gadget replacement to explicitly reduce the problem to the setting of unbreakable graphs seems to be new in the context of \EP-type results.




\section{Unbreakable graphs}\label{sec:unbreakable}

In this section we give a proof of \cref{thm:main} under the assumption that the considered graph is suitably unbreakable. The reduction of the general case to the unbreakable case is provided in the next sections. First, we need to introduce the concept of unbreakability formally.

Recall that a \emph{separation} in a graph~$G$ is a pair of vertex subsets~$(A,B)$ such that~$A \cup B = V(G)$ and there is no edge with one endpoint in $A \setminus B$ and second in $B \setminus A$. The \emph{order} of the separation is~$\Abs{A \cap B}$. With these notions in place, the definition of unbreakability reads as follows.

\begin{definition}
Let $q,k\in \N$.
We call a graph~$G$ {\em{$(q,k)$-unbreakable}} if for every separation~$(A,B)$ of $G$ of order less than~$k$, we have
$\Abs{A} < q$ or $\Abs{B} < q$.
\end{definition}

We remark that the definition used in~\cite{CyganLPPS19} differs by details of no consequence; we find the formulation presented above more convenient to work with.
Note that Menger's Theorem implies that if a graph $G$ is $(q,k)$-unbreakable, then for every pair of vertex subsets~$S$ and $T$, each of cardinality at least~$q$, there are~$k$ vertex-disjoint \ST paths.

%

In our reasoning we use the following result of Chudnovsky et al.~\cite{chudnovsky2006packing} about the \EP property of non-null \SS paths.

\begin{theorem}[\cite{chudnovsky2006packing}]
  \label{thm:non-null-S-paths-EP}
  Let $\Gamma$ be a group, $G$ be a $\Gamma$-labelled graph, $S$ be a set of vertices of $G$, and $k$ be an integer. Then $G$ contains at least one of the following objects:
  \begin{itemize}
    \item A family of $k$ vertex-disjoint non-null \SS paths.
    \item A set of at most $2k-2$ vertices that meets every non-null \SS path.
  \end{itemize}
\end{theorem}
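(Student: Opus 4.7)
I would prove \cref{thm:non-null-S-paths-EP} by reduction to Lovász's matroid matching theorem for linearly representable matroids, following the strategy of Chudnovsky, Geelen, Gerards, Goddyn, Lohman, and Seymour. The central idea is to construct an auxiliary linear matroid whose matchings correspond to vertex-disjoint non-null \SS paths in~$G$, and then exploit the min-max duality of matroid matching to extract the hitting set when the matching number is small.

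The construction would proceed as follows. After normalising $G$ (assume $G$ is connected, delete every vertex not lying on any non-null \SS path, and subdivide edges so that~$S$ is an independent set), I would define an auxiliary matroid~$M$ on a ground set paired at each vertex $v \notin S$: the pairs at~$v$ encode the possible ``passages'' through~$v$, each labelled by the corresponding product of group elements. The ambient vector space is built from the regular representation of~$\Gamma$, which is finite-dimensional and well-suited to tracking the multiplicative constraint that a non-null path imposes (this is where finiteness of $\Gamma$ is convenient). One then verifies that a subgraph of~$G$ decomposes into vertex-disjoint non-null \SS paths precisely when its ``passage'' selections form an independent matching in~$M$.

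Lovász's matroid matching min-max theorem for linear matroids would then yield the desired dichotomy: either the matching number of $M$ is at least~$k$, giving $k$ vertex-disjoint non-null \SS paths in~$G$; or there is a matroid cover of rank at most $k-1$, from which one reads off a vertex hitting set of size at most $2k-2$ meeting every non-null \SS path in~$G$.

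The main obstacle is the design of the matroid~$M$: one must ensure that (a) matchings in $M$ biject with vertex-disjoint packings of non-null \SS paths, so that the matching number of $M$ equals the packing number in~$G$, and (b) matroid covers translate to vertex hitting sets of the tight size $2k-2$. The factor of~$2$ is intrinsic: each vertex in the hitting set can serve as the common end of two distinct ``half-lines'' in a matroid cover, which is why the classical Menger-type bound of $k-1$ cannot be attained in this setting. A less elegant alternative would be a Mader-style augmenting-path argument: start from a maximum packing, and show that if its size is less than~$k$ then an obstruction structure emerges which directly produces the hitting set of size~$2k-2$; however, capturing the group-labelling condition through augmenting structures requires essentially the same algebraic bookkeeping as the matroid approach, so I would prefer the matroid route.
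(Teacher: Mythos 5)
The paper does not prove this statement: it is quoted from Chudnovsky, Geelen, Gerards, Goddyn, Lohman, and Seymour~\cite{chudnovsky2006packing}, so there is no in-paper proof to compare against. Your sketch goes via Lov\'asz's min-max theorem for linear matroid matching. That is a legitimate alternative route --- Tanigawa and Yamaguchi later rederived the Chudnovsky et al.\ theorem precisely via linear matroid parity --- but it is not the route of the cited paper, whose argument is a direct combinatorial induction in the Gallai--Mader tradition (manipulating the graph and building the deficiency structure explicitly), not a reduction to matroid matching.

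Independent of attribution, the sketch has a substantive gap. You construct the ambient vector space from the regular representation of $\Gamma$, which forces $\Gamma$ to be finite; but \cref{thm:non-null-S-paths-EP} is stated for arbitrary groups, and, as the paper notes immediately after the theorem, the bound $2k-2$ is independent of $\Gamma$ and holds even for infinite $\Gamma$. A regular-representation matroid cannot give that, and more generally the linear-matroid-matching route needs a \emph{representable} matroid, which one does not have for an arbitrary group; so at best you would obtain the finite-group case, which is strictly weaker than the stated theorem. Beyond that, the two points you flag as ``to be verified'' --- that matchings in $M$ biject with vertex-disjoint packings of non-null \SS paths, and that a rank-$(k-1)$ matroid cover translates into a vertex hitting set of size at most $2k-2$ --- are precisely where all the difficulty of the theorem lies, and the sketch supplies neither the encoding of the non-null constraint in the matroid nor the cover-to-vertex-set translation. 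As written this is a plausible program, not a proof.
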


Note that the bound of $2k-2$ in \cref{thm:non-null-S-paths-EP} is independent of the group~$\Gamma$. In fact, the result holds even when $\Gamma$ is infinite. This stands in contrast with \cref{thm:main}, where we can only claim a bound that is dependent on the group $\Gamma$, which is required to be finite. We elaborate more on this in~\cref{sec:conclusions}.

With all the tools in place, we can state and prove the statement for unbreakable graphs. Recall that we say that a family of paths has {\em{congestion $c$}} if every vertex belongs to at most $c$ paths from the~family.

\begin{proposition}
  \label{thm:unbreakable-EP}
  Let $\Gamma$ be a group and $G$ be a $\Gamma$-labelled graph that is $(q,k)$-unbreakable, for some $q,k\in \N$. Further, let $S$ and $T$ be subsets of vertices of $G$. Then $G$ contains at least one of the following objects:
  \begin{itemize}
    \item A family of~$k$ non-null \ST paths with congestion $2$.
    \item A set of at most~$4q+2k-6$ vertices that meets every non-null \ST path.
  \end{itemize}
\end{proposition}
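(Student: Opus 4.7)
The plan is to reduce the statement to \cref{thm:non-null-S-paths-EP} (Chudnovsky et al.) applied to the set $S':=S\cup T$, and then to promote non-null $S'$-$S'$ paths of the ``unwanted'' types to non-null \ST paths via a tripod construction that uses unbreakability to attach linking paths to~$T$.

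First, if $|S|\leq 4q+2k-6$ or $|T|\leq 4q+2k-6$, return $S$ or $T$ itself as the hitting set; otherwise (modulo degenerate small values of~$q,k$), we have $|T|\geq q$. Apply \cref{thm:non-null-S-paths-EP} with set $S'=S\cup T$ and parameter~$2q+k-2$. Either it yields a hitting set of size $2(2q+k-2)-2=4q+2k-6$ for all non-null $S'$-$S'$ paths, which is a fortiori a hitting set for non-null \ST paths and we are done; or it yields a family~$\mathcal P$ of $2q+k-2$ pairwise vertex-disjoint non-null $S'$-$S'$ paths. Classify each path in~$\mathcal P$ by type: \ST if one endpoint lies in~$S$ and one in~$T$, \SS if both lie in $S\setminus T$, and \TT if both lie in $T\setminus S$. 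If at least~$k$ of them are of type~\ST, they already form a congestion-$1$ packing of the required size. Otherwise at least $2q-1$ paths are of type \SS or~\TT, so by pigeonhole at least~$q$ share a common type; by the $S\leftrightarrow T$ symmetry we may assume we have $q$ vertex-disjoint non-null \SS paths $P_1,\ldots,P_q$ with endpoints in $S\setminus T$.

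For each~$i$ fix an endpoint~$s_i$ of~$P_i$ and set $U:=\{s_1,\ldots,s_q\}$. Since $|U|=q$ and $|T|\geq q$, the $(q,k)$-unbreakability of~$G$ combined with Menger's theorem yields~$k$ pairwise vertex-disjoint $U$-$T$ paths $Q_1,\ldots,Q_k$, which (after possible shortening) we may assume meet $U\cup T$ only at their two endpoints; write those as~$s_{\pi(j)}\in U$ and~$t_j\in T$, with~$\pi$ injective by disjointness of the~$Q_j$. As $Q_j$ may share internal vertices with~$P_{\pi(j)}$, let~$w_j$ be the \emph{last} vertex of~$Q_j$ (traversed from~$s_{\pi(j)}$ to~$t_j$) lying on~$P_{\pi(j)}$, and let~$Q_j'$ be the sub-path of~$Q_j$ from~$w_j$ to~$t_j$; by the choice of~$w_j$, $Q_j'$ meets~$P_{\pi(j)}$ only at~$w_j$. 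Orient~$P_{\pi(j)}$ and split it at~$w_j$ into sub-paths $P_j^1,P_j^2$, so that $\lambda(P_{\pi(j)})=\lambda(P_j^1)\cdot\lambda(P_j^2)\neq 1_\Gamma$. Then $P_j^1\cup P_j^2\cup Q_j'$ is a genuine tripod and contains two \ST paths with values $\lambda(P_j^1)\cdot\lambda(Q_j')$ and $\lambda(P_j^2)^{-1}\cdot\lambda(Q_j')$; if both were $1_\Gamma$, then in particular $\lambda(P_j^1)\cdot\lambda(P_j^2)=1_\Gamma$, contradicting non-nullity of~$P_{\pi(j)}$. Take a non-null one as~$R_j$.

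For congestion, observe that $R_j\subseteq V(P_{\pi(j)})\cup V(Q_j)$, and each vertex of~$G$ lies in at most one~$P_i$ and at most one~$Q_j$ by pairwise disjointness of each family; hence any vertex is in at most two of $R_1,\ldots,R_k$. Thus $\{R_1,\ldots,R_k\}$ is the required congestion-$2$ packing of~$k$ non-null \ST paths. The main obstacle I anticipate is calibrating the Chudnovsky parameter to $2q+k-2$ so that the hitting-set case delivers exactly the bound $4q+2k-6$ while the pigeonhole step still produces~$q$ paths of a common \SS or \TT type; the tripod step itself is elementary, but it crucially requires selecting the \emph{last} crossing~$w_j$ in order to guarantee that the two candidate \ST paths are genuine simple paths rather than walks.
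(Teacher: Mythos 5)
Your proof is correct and follows essentially the same approach as the paper's: apply \cref{thm:non-null-S-paths-EP} to $S\cup T$ with parameter $2q+k-2$, extract $q$ vertex-disjoint same-side non-null paths, link them to $T$ by $(q,k)$-unbreakability, and resolve the resulting tripods into non-null \ST paths with congestion~$2$. The only cosmetic differences are that you pick an endpoint of each $P_i$ as the hub for the linkage to~$T$ (the paper picks an arbitrary vertex of $P_i$, which is equally valid), and you frontload the $|T|\geq q$ check rather than performing it just before invoking unbreakability.
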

\begin{proof}
  Letting $R=S\cup T$, we first apply \cref{thm:non-null-S-paths-EP} to~$R$, thus obtaining one of the following objects:
  \begin{itemize}
  \item a family $\cal F$ consisting of~$2q+k-2$ vertex-disjoint non-null \RR paths, or
  \item a vertex subset~$X$ of size at most~$4q+2k-6$ that meets every non-null \RR path.
  \end{itemize}
  In the second case, $X$ also meets every non-null \ST path, hence it satisfies the prerequisite of the second outcome of the proposition statement. Hence, we assume that there is a family $\cal F$ as in the first case.

  If~$k$ among the paths of $\cal F$ have one endpoint in~$S$ and the other endpoint in~$T$,
  then they form a family of $k$ non-null \ST paths satisfying the prerequisite of the first outcome of the proposition statement, and we are done.
  We may thus assume that $\cal F$ contains~$2q-1$ non-null paths that are \SS paths or \TT paths.
  Without loss of generality, assume that $q$ of them are \SS paths, and denote them by $P_1,\dots,P_q$.

  For each path $P_i$, pick an arbitrary vertex~$c_i$ traversed by $P_i$, and define~$C = \{c_1,\dots,c_q\}$.
  We may assume that~$\Abs{T} \ge q$, as otherwise $T$ itself is a set of size at most $q-1\leq 4q+2k-6$ (we recall that $k, q \geq 1$ holds) that meets all non-null \ST paths.
  Since $G$ is $(q,k)$-unbreakable, there are~$k$ vertex-disjoint $C$--$T$ paths, say~$Q_1,\dots,Q_k$.
  By renaming the paths~$P_1,\ldots,P_q$ if necessary, let us assume that~$Q_i$ connects~$T$ with~$c_i$.
  Furthermore, by shortening~$Q_i$ and changing~$c_i$ to be the first vertex of intersection between~$P_i$ and~$Q_i$,
  we may also assume that~$P_i$ and~$Q_i$ are disjoint except for~$c_i$.
  Then~$P_i \cup Q_i$ forms a \emph{tripod} with three disjoint paths starting from the center~$c_i$,
  one ending in~$T$ (namely~$Q_i$) and two ending in~$S$ (resulting from~$P_i$).

  \begin{claim}
    \label{clm:tripod-path}
    Each tripod~$P_i \cup Q_i$ contains a non-null \ST path.
  \end{claim}
  \begin{claimproof}
    For simplicity, call $P_i = P$, $Q_i = Q$, and $c_i = c$.
    Let~$A$ and $B$ be the two subpaths of~$P$ connecting~$c$ to~$S$, say with endpoints~$a,b \in S$, respectively.
    If we consider $P$ as oriented from~$a$ to~$b$, as well as~$A$ as oriented from~$a$ to $c$ and $B$ as oriented from $b$ to $c$, then the values of those paths satisfy
    \[\lambda(P) = \lambda(A) \cdot \lambda(B)^{-1}.\]
    Since~$P$ is a non-null path, it follows that~$\lambda(A)\neq \lambda(B)$. Considering the paths $A\cup Q$, $B\cup Q$, and $Q$ as oriented so that they start at $a$, $b$, and $c$, respectively, we have
    \[\lambda(A\cup Q)=\lambda(A)\cdot \lambda(Q)\neq \lambda(B)\cdot \lambda(Q)=\lambda(B\cup Q).\]
    It follows that at least one of the paths $A\cup Q$ and $B\cup Q$ is a non-null  \ST path.
  \end{claimproof}

  \begin{claim}
    \label{clm:tripod-packing}
    Every vertex of~$G$ belongs to at most two of the tripods~$P_1 \cup Q_1,\dots,P_k \cup Q_k$.
  \end{claim}
  \begin{claimproof}
    By construction, the paths~$P_1,\dots,P_k$ are all vertex-disjoint, and so are the paths~$Q_1,\dots,Q_k$.
    Thus, a vertex may belong to one tripod as part of~$P_i$ and to a second as part of~$Q_j$, but no more.
  \end{claimproof}
  Thus \cref{clm:tripod-path} gives~$k$ non-null \ST paths from the~$k$ tripods,
  and \cref{clm:tripod-packing} shows that this family of paths has congestion~2.
\end{proof}

\section{Types and gadget replacement}\label{sec:types}

\Cref{thm:unbreakable-EP} proves our main result in the case of unbreakable graphs.
In the general case, we reduce the problem to~\cref{thm:unbreakable-EP}
by recursively splitting the graph along separations of small order.
More precisely, given a separation~$(A,B)$ in~$G$ with both~$A$ and~$B$ large,
we replace one of the two sides by a \emph{gadget} of bounded size,
which preserves the behavior of the replaced side with regards to congestion-$2$ packings and hitting sets of non-null \ST paths.

Preserving the behavior is formalized through a notion of \emph{types}. More precisely,
given a $\Gamma$-labelled graph~$G$ with vertex subsets $S$ and $T$,
and with a tuple of \emph{interface vertices}~$\tup u=(u_1,\dots,u_r)$,
$\type(G,\tup u)$ is a piece of information that encodes all the relevant data about congestion-$2$ packings and hitting sets of paths
from the interface vertices to themselves and to~$S$ or~$T$.
It satisfies two key properties:
\begin{enumerate}
  \item \emph{Finiteness}: For a fixed interface size~$r$ and a finite group~$\Gamma$, there are only finitely many types of $\Gamma$-labelled graphs with $r$ interface vertices.
  \item \emph{Compositionality}: Roughly speaking, whenever $(A,B)$ is a separation of a $\Gamma$-labelled graph~$G$, the subgraph induced by $B$ can be replaced by another one with the same type without changing the congestion-$2$ packing number and the hitting number of non-null \ST paths in~$G$.
\end{enumerate}
Then the idea is as follows: given a separation $(A,B)$ witnessing that $G$ is not $(q,k)$-unbreakable for some large $q$, we replace the subgraph induced by $B$ with a \emph{gadget} of the same type. Compositionality ensures that this replacement is unproblematic, while due to finiteness we can use a gadget of size dependent only on $k$. So if $q$ is larger than the maximum gadget size, the replacement always leads to a strict decrease in the size of the graph.

The gadget replacement methodology described above is widespread in the area of parameterized algorithms. See for instance the work of Lokshtanov, Ramanujan, Saurabh, and Zehavi~\cite{LokshtanovR0Z18} on reducing computational problems to the setting of unbreakable graphs. In this context, it is often convenient to consider types defined through logic.
Roughly speaking, when we consider a logic $\cal L$ (typically, the First-Order logic $\mathsf{FO}$ or the Monadic Second-Order logic $\mathsf{MSO}$, or their variants), the {\em{rank-$p$ $\cal L$-type}} of a graph $G$ with interface $\tup u$ consists of all sentences of $\cal L$ of quantifier rank at most $p$ that are satisfied in the structure consisting of $G$ with the interface vertices of $\tup u$ highlighted as constants.
Types defined in this way satisfy both finiteness and compositionality. Assuming logic $\cal L$ is suitably selected and rank $p$ is chosen large enough, the rank-$p$ $\cal L$-type of $G$ with interface $\tup u$ captures all the properties of $G$ and $\tup u$ that are of relevance to the studied problem. This gives a robust notion of a type that satisfies all the desired requirements. We refer an interested reader to the introductory article of Grohe and Kreutzer~\cite{GroheK09} for a more in-depth overview to the topic. We also remark that gadget replacement using $\mathsf{MSO}$ types was already used in the context of \EP-type results by Fiorini, Joret, and Wood~\cite{FioriniJW13}, albeit not with a reduction to the unbreakable case in~mind.

Unfortunately, for technical reasons we are unable to conduct our proof using the standard notion of $\mathsf{MSO}$ types. This is mostly connected to the fact that we are concerned with quantitative properties, such as the congestion-$2$ packing number or the hitting number for certain families of paths, and expressing those quantitative properties in logic would create a circular dependency in parameters. Therefore, we resort to defining our own, purely combinatorial notion of a type that captures exactly the properties we need. The drawback of this is that we have to prove both finiteness and compositionality for the introduced notion by hand, rather than to appeal to classic results from the literature.

Before defining types and precisely stating their properties,
let us address some minor issues of conventions. First, we speak about induced subgraphs of $\Gamma$-labelled graphs. They are defined naturally: for a subset of vertices $A$ of a $\Gamma$-labelled graph $G$, the \emph{induced subgraph} $G[A]$ consists of vertices of $A$ and all edges of $G$ with both endpoints in $A$, with labels inherited from $G$.
Second, when replacing a part of a graph, we wish to allow both the removed part and the gadget replacing it to contain vertices of~$S$ and~$T$.
The meaning of~$S$ and~$T$ may then become quite unclear.
For this reason, we consider the sets of source and target vertices
to be intrinsic parts of a graph~$G$, denoted~$S(G)$ and~$T(G)$ respectively,
using~$S$ and~$T$ when there is no ambiguity.
For instance, an \ST path in~$G$ now really means an~$S(G)$--$T(G)$ path.
When considering a subgraph~$G[A]$ induced by some subset of vertices~$A$,
the subsets are also restricted to~$A$: we set $S(G[A]) = S(G) \cap A$ and $T(G[A]) = T(G)\cap A$.
Such graphs $G$ supplied with sets $S(G)$ and $T(G)$ are called {\em{\STgraphs}}.
When we say two \STgraph $G$ and $G'$ are equal, that is $G=G'$, then this also means that $S(G)=S(G')$ as well as $T(G)=T(G').$
Note that an \STgraph can be $\Gamma$-labelled as well.

\newcommand{\pack}{\mathsf{packing}}
\newcommand{\hit}{\mathsf{hitting}}

Let us now state the two desired properties of types precisely,
before constructing a definition satisfying them.
Here, for a $\Gamma$-labelled \STgraph $G$, we define
\begin{itemize}
 \item $\pack(G)$ to be the maximum size of a family of non-null \ST paths in $G$ with congestion $2$, and
 \item $\hit(G)$ to be the minimum size of a vertex subset that meets all non-null \ST paths in $G$.
\end{itemize}

\begin{restatable}[Finiteness]{lemma}{finiteness}
  \label{lem:type-finite}
  For every~$r\in \N$ and a finite group~$\Gamma$, there are finitely many distinct types
  of $\Gamma$-labelled \STgraphs with an interface of size at most~$r$.
\end{restatable}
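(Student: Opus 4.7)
The idea is to define $\type(G, \tup u)$ as a tuple of invariants indexed by a finite combinatorial set of \emph{patterns}, with each coordinate taking values in a finite range; finiteness then follows by a direct counting argument. A pattern will encode the possible ``external signature'' of a collection of internally vertex-disjoint walks inside $G$ whose endpoints lie on the interface, in $S$, or in $T$. Concretely, a pattern can be specified as a partial matching on the set $\{u_1,\dots,u_r\} \cup \{*_S, *_T\}$, where $*_S$ and $*_T$ are tokens representing $S$ and $T$, together with a label in $\Gamma$ on each matched pair recording the product of arc labels along the corresponding walk. Since $r$ is fixed and $\Gamma$ is finite, the number of patterns is bounded by some function $f_1(r, |\Gamma|)$.

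For each pattern $P$ the type will record a number $n_P \in \{0, 1, \dots, N\}$ capturing, up to a truncation threshold $N = N(r, |\Gamma|)$, the largest congestion-$2$ packing of path systems in $G$ realizing $P$, together (possibly) with a truncated hitting number for $P$; the truncation is what keeps each coordinate in a finite set. With this definition in place, finiteness is immediate: $\type(G, \tup u)$ is an element of a product of the form $\{0, 1, \dots, N\}^{f_1(r, |\Gamma|)}$, a set of size at most $(N+1)^{f_1(r,|\Gamma|)}$, hence finite.

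The only subtle point is that the finiteness lemma is stated before the actual definition of a type is given, so the proof proposal must commit to the right schema for that definition. The interface size at most $r$ bound is used only to bound the number of patterns; once patterns are enumerated, the same truncated-count scheme applies uniformly to all interface sizes up to $r$.

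\textbf{Expected main obstacle.} The finiteness argument itself is a routine counting once the right notion of pattern and the threshold $N$ are fixed. The real difficulty is not in this lemma but in its companion compositionality statement: one must choose the granularity of patterns and the value of $N$ to be just rich enough that, given two $\Gamma$-labelled \STgraphs of the same type, one can substitute one for the other across a low-order separation while exactly preserving $\pack$ and $\hit$. Thus, all the effort should be invested into designing the type; afterwards, \cref{lem:type-finite} follows from the box-counting estimate above.
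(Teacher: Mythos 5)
Your high-level strategy — design a type as a map from $\Gamma$-labelled \STgraphs with an interface of size at most $r$ into a codomain that is finite as a function of $r$ and $|\Gamma|$, and then observe that finiteness is a triviality once that design is fixed — is exactly the spirit of the paper's argument. The paper's own proof is likewise a short box-counting: it bounds the number of path constraints by $|\Gamma|\cdot(r+2)^2\cdot 2^r$, the number of $k$-path system problems with $k\le 2r$ by a single exponential, the number of $\ell$-hitting-set problems with $\ell\le r$ by a double exponential, and concludes that since a type is a subset of the (finite) universe of such hitting-set problems, there are at most triple-exponentially many types.

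Where your proposal diverges — and where it has genuine gaps as a substitute for the paper's definition — is in the design you commit to. First, your patterns record only the two endpoints of each walk and its value $\lambda$. The paper's path constraints additionally record the \emph{set of interface vertices the path passes through}; this is essential for compositionality, because when pieces of a path are swapped across a separation the congestion at interface vertices must be tracked, and pieces must be stitched back together at the correct interface vertices. Second, you model a collection of walks as a \emph{partial matching} on $\{u_1,\dots,u_r,*_S,*_T\}$. This cannot express several pieces sharing the same interface endpoint, which is mandatory under congestion-$2$, nor the selective disjointness requirements between designated pairs of pieces that the paper's ``path system problems'' carry. Third, the paper's type does not record truncated packing or hitting \emph{counts per pattern}; it records, for every hitting-set-of-path-system problem (with $\ell\le r$, $k\le 2r$), whether a hitting set of that size exists blocking that \emph{set} of path system problems simultaneously. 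This is a richer Boolean family than any per-pattern truncated numeric, and the difference matters: preserving $\hit(G)$ under replacement requires knowing which \emph{combinations} of path system problems a bounded vertex set can kill, not just the per-pattern hitting numbers. So while your counting step is sound once a finite-codomain type is fixed, the type you propose is not the paper's and almost certainly would not support the companion Compositionality lemma; you say as much yourself, but that is precisely the content that cannot be deferred.
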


\vspace{-0.2cm}

\begin{restatable}[Compositionality]{lemma}{compositionality}
  \label{lem:type-compositional}
  Let $\Gamma$ be a finite group.
  Let~$G$ and $G'$ be two $\Gamma$-labelled \STgraphs,
  and~$A$ be a~subset of vertices on which they coincide: $A\subseteq V(G)\cap V(G')$ and $G[A] = G'[A]$\footnote{Note that since $G[A]$ and $G'[A]$ are considered to be \STgraphs, $G[A] = G'[A]$ also implies that $S(G)\cap A = S(G')\cap A$.}.
  Further, let~$(A,B)$ and~$(A,B')$ be separations in~$G$ and $G'$, respectively,
  with $A \cap B = A \cap B' = \{u_1,\dots,u_r\}$.
  Finally, assume that
  \begin{itemize}
    \item $\type(G[B],\tup u) = \type(G'[B'],\tup u)$ where $\tup u=(u_1,\ldots,u_r)$, and
    \item neither~$G[B \setminus A]$ nor~$G'[B' \setminus A]$ contains a non-null \ST path.
  \end{itemize}
  Then we have
  \[\pack(G)=\pack(G')\qquad\textrm{and}\qquad \hit(G)=\hit(G').\]
\end{restatable}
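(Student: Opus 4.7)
The plan is to prove both equalities by a symmetric cut-and-paste argument: we exchange the $B$-side of every optimal structure (a packing, respectively a hitting set) between $G$ and $G'$ using only the information encoded by $\type(G[B],\tup u)=\type(G'[B'],\tup u)$. The key structural observation that unlocks the argument is that, under the hypothesis, every non-null \ST path of $G$ either lies entirely inside $G[A]$ or visits at least one interface vertex $u_i$. Indeed, if such a path uses only vertices of $B$ and avoids $A\cap B$, then it is a non-null \ST path of $G[B\setminus A]$, which is forbidden; and if it uses vertices on both sides of the separation, it must cross $A\cap B$. Therefore, non-null \ST paths decompose canonically along the interface $\{u_1,\dots,u_r\}$ into alternating pieces in $G[A]$ and in $G[B]$, and the $A$-pieces live unchanged in the shared induced subgraph $G[A]=G'[A]$.

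For $\pack(G)\le \pack(G')$, I would take an optimal congestion-$2$ packing $\mathcal F$ of non-null \ST paths in $G$ and split each of its paths at the interface, producing a congestion-$2$ family $\mathcal F_A$ of pieces in $G[A]$ and a congestion-$2$ family $\mathcal F_B$ of pieces in $G[B]$. The family $\mathcal F_B$ is a specific combinatorial object over the interface $\tup u$: each piece is marked with its endpoints (either an interface index $i\in\{1,\dots,r\}$, or membership in $S(G)\cap (B\setminus A)$, respectively $T(G)\cap (B\setminus A)$), its label in $\Gamma$, and the way it meshes with the other pieces under the congestion bound. The notion of type to be introduced in the next section will, by design, record exactly which such interface-boundaries of piece-families are realizable inside the labelled graph, so the hypothesis $\type(G[B],\tup u)=\type(G'[B'],\tup u)$ provides a corresponding family $\mathcal F_{B'}$ in $G'[B']$ that matches $\mathcal F_B$ piece-for-piece in endpoints and labels. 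Re-gluing $\mathcal F_A$ with $\mathcal F_{B'}$ along the interface and the $S$-/$T$-endpoints reconstructs a congestion-$2$ packing of $G'$; since each reconstructed path has the same total $\Gamma$-value as its counterpart in $\mathcal F$, it remains non-null and \ST. The opposite inequality is obtained by swapping the roles of $G$ and $G'$.

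For $\hit(G)\le \hit(G')$, I would take an optimal hitting set $X'$ in $G'$ and again split it as $X'_A=X'\cap A$ and $X'_B=X'\cap (B'\setminus A)$. The goal is to produce a set $X_B\subseteq B\setminus A$, of size at most $|X'_B|$, such that $X=X'_A\cup X_B$ meets every non-null \ST path of $G$. The type will encode for $G[B]$ precisely this dual information: for each admissible "external interface behaviour'' on the $A$-side (namely, each way in which non-hit non-null \ST paths could enter $B$ through $\tup u$, or start/end at an $S$- or $T$-vertex of $B\setminus A$), the minimum number of internal vertices needed to neutralize it. Type equality therefore transfers the optimal cost attained by $X'_B$ in $G'[B']$ into a witness $X_B$ of equal size in $G[B]$. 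Combined with the observation that every non-null \ST path of $G$ either is killed by $X'_A\subseteq A$ already (when it lies in $G[A]$) or crosses the interface in a pattern that $X_B$ is guaranteed to hit, this shows $X'_A\cup X_B$ is a valid hitting set. The reverse inequality is again symmetric.

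The main obstacle I anticipate is not the recombination itself, which is essentially formal once the notion of type is in place, but the design of the type so that both halves of the argument above go through simultaneously while keeping $\type(G[B],\tup u)$ in a finite range. The type must record, on the one hand, the family of \emph{realizable} interface piece-configurations with their labels and congestion profiles, and on the other hand, the \emph{minimum hitting cost} against every residual external demand; the delicate point is that these two kinds of information are coupled, because pieces are allowed to have endpoints in $S\cap (B\setminus A)$ or $T\cap (B\setminus A)$, and the "no non-null \ST path in $B\setminus A$'' hypothesis is what prevents unbounded behaviour entirely internal to $B$ from leaking into the type. The authors' combinatorial definition of type in the next section will presumably be tailored exactly to this balance, and the proofs of Lemmas \ref{lem:type-finite} and \ref{lem:type-compositional} will amount to verifying that this tailoring is tight.
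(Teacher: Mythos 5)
Your high-level route is the same as the paper's: decompose each non-null \ST path at the interface $\tup u$ into alternating pieces in $G[A]$ and $G[B]$, use equality of types to transport the $B$-side configuration to $G'[B']$, and re-glue (and symmetrically for hitting sets). Your key structural observation — that by the ``no non-null \ST path in $G[B\setminus A]$'' hypothesis, every non-null \ST path either lies entirely in $G[A]$ or crosses the interface, so the decomposition is canonical — is also exactly the observation driving the paper's proof.

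That said, as written the proposal does not constitute a proof: it is a specification of what you would like the type to encode, and the entire content of the lemma is the verification that a concrete, \emph{finitely-valued} type suffices. Two specific points that you leave open are exactly where the delicacy lies, and I would call them genuine gaps. First, for the hitting direction you propose that the type records, per ``external interface behaviour,'' the minimum number of internal vertices needed to neutralize it; but without an a priori bound on this cost the type would not have finite range. The paper gets around this by observing that one may assume $|X \cap (B\setminus A)| \le r$ for a minimum hitting set $X$ (otherwise replace that part by the interface itself), which lets it encode only $\ell$-hitting-set problems for $\ell \le r$; you never argue this bound. Second, the re-gluing requires each path constraint to record not just the endpoints and the $\Gamma$-value of a piece but the exact subset of interface vertices it passes through — this is needed both to account for congestion at interface vertices after replacement and to guarantee the reassembled concatenation of pieces is a simple path (no repeated vertices). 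You gesture at ``how pieces mesh under the congestion bound,'' but the missing ingredient is precisely that the constraint pins the traversed interface set, which the paper builds into its definition of a path constraint.
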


Call a $\Gamma$-labelled \STgraph $G$ with interface $\tup u=(u_1,\ldots,u_r)$ {\em{safe}} if in $G-\{u_1,\ldots,u_r\}$ there is no non-null \ST path.
From the finiteness we immediately derive the following statement, which gives us gadget replacement for safe graphs.

\begin{corollary}
  \label{cor:type-bounded}
  For every finite group $\Gamma$ and $r\in \N$, there exists $h\in \N$ such that the following holds: For any safe $\Gamma$-labelled \STgraph~$G$ with an interface $\tup u$ of size at most~$r$, there exists a safe $\Gamma$-labelled \STgraph $\widehat{G}$ also with interface $\tup u$ such that $\type(G,\tup u)=\type(\widehat{G},\tup u)$ and $\widehat{G}$ has at most $h$ vertices.
\end{corollary}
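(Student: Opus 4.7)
The plan is a straightforward extremal argument that simply combines finiteness with the observation that every safe graph trivially realizes its own type.

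\textbf{Step 1: Finite catalogue of types.} Apply \cref{lem:type-finite} to the parameters~$\Gamma$ and~$r$ to obtain a finite set~$\mathcal{T}$ of all possible types of $\Gamma$-labelled \STgraphs whose interface has size at most~$r$.

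\textbf{Step 2: Minimum safe representatives.} Call a type $\tau \in \mathcal{T}$ \emph{safely realizable} if there exists some safe $\Gamma$-labelled \STgraph with type~$\tau$. For each safely realizable $\tau$, choose among all safe $\Gamma$-labelled \STgraphs of type~$\tau$ one that minimizes the number of vertices, and call it~$H_\tau$. Since the vertex set of a graph is only defined up to renaming, we may assume that the interface of $H_\tau$ is literally the tuple~$\tup u$ (reorder the vertex names if necessary; this does not affect the type or safety). Then set
\[
h \;=\; \max_{\tau\in \mathcal{T}\text{ safely realizable}} |V(H_\tau)|,
\]
which is finite because $\mathcal{T}$ is finite.

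\textbf{Step 3: Conclusion.} Let $G$ be any safe $\Gamma$-labelled \STgraph with interface~$\tup u$ of size at most~$r$, and let $\tau_0 = \type(G,\tup u)$. Then $\tau_0$ is safely realizable, as witnessed by~$G$ itself, so $H_{\tau_0}$ is defined. Setting $\widehat{G} = H_{\tau_0}$ yields a safe $\Gamma$-labelled \STgraph with interface~$\tup u$, with $\type(\widehat{G},\tup u) = \tau_0 = \type(G,\tup u)$, and with $|V(\widehat{G})| \le h$.

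The only thing one has to verify is that the notion of type carries an implicit isomorphism invariance so that the relabelling of interface vertices in Step 2 is legitimate; this is immediate from the definition of type sketched in the preceding discussion, since the type only records interface positions via the ordered tuple and not via the specific vertex identities. No obstacle is anticipated: the entire content lies in \cref{lem:type-finite}, and the corollary is a pigeonhole extraction of a bounded-size representative from each of finitely many equivalence~classes.
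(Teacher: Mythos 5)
Your proposal is correct and takes essentially the same approach as the paper: fix a minimum-size safe representative for each safely realizable type (there are finitely many by Lemma~\ref{lem:type-finite}), and let $h$ be the maximum size of these representatives. The extra remark about relabelling interface vertices is a harmless detail the paper leaves implicit.
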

\begin{proof}
  Call a type $\tau$ {\em{safe-realizable}} if there is a safe graph with type $\tau$. For every safe-realizable type $\tau$, let $h_\tau$ be the minimum number of vertices in a safe graph with type $\tau$. Further, let $h$ be the supremum of numbers $h_\tau$ among the safe-realizable types $\tau$ with at most $r$ interface vertices. As there are finitely many such types by \cref{lem:type-finite}, $h$ is finite. That $h$ satisfies the required property follows directly from the~construction.
\end{proof}

Note that the argument of \cref{cor:type-bounded} is not constructive, hence the obtained bound $h$ is not a priori computable from $\Gamma$ and $r$. We discuss this issue further in \cref{sec:conclusions}.

\paragraph{Definition.} Fix a finite group $\Gamma$.
We proceed to the formal definition of the type of a $\Gamma$-labelled \STgraph~$G$
with~$r$ interface vertices~$\tup u=(u_1,\dots,u_r)$.
\begin{description}
  \item[Path] A \emph{path constraint} for an oriented path~$P$ specifies
  \begin{enumerate}
    \item the value $\lambda(P)\in \Gamma$;
    \item each of the end-vertices of~$P$, as either a specified interface vertex~$u_i$, or a marker `any vertex in~$S$' or `any vertex in~$T$'; and
    \item the non-empty subset of interface vertices through which~$P$ passes.
  \end{enumerate}

    An example of a path constraint is:
    a path from~$S$ to~$u_1$ passing through~$u_2$ (but no other interface vertex than~$u_1$ and $u_2$), and with value~$1_\Gamma$.

    Note that `an \ST path disjoint from the interface' is \emph{not} a valid path constraint: it is crucial that any path considered intersects the interface.

  \item[Path system] A \emph{$k$-path system problem} consists of~$k$ path constraints~$C_1,\dots,C_k$
    and, for each pair of indices $i,j\in \{1,\ldots,k\}$ with $i\neq j$, possibly the additional requirement that the $i$th and the $j$th path need to be disjoint.
    A {\em{solution}} to this problem consists of~$k$ paths~$P_1,\dots,P_k$ in $G$ such that each path~$P_i$ satisfies the constraint~$C_i$,
    paths $P_i$ and $P_j$ are disjoint if so required, and every vertex of $G$ appears in at most two among the paths~$P_1,\dots,P_k$.

    Note that since each path must use an interface vertex and the congestion is limited to~2,
    a path system problem is only meaningful for~$k \le 2r$ paths.

  \item[Hitting set] An \emph{$\ell$-hitting set of $k$-path system problem}
    is a set of $k$-path system problems~$\PPP_1,\dots,\PPP_m$.
    A~{\em{solution}} to this hitting set problem is a set~$X \subset V(G)$ consisting of at most~$\ell$ vertices
    such that in~$G \setminus X$, none of the problems~$\PPP_i$ have a solution.

    Note that for any~$\ell \ge r$,
    the interface $\{u_1,\dots,u_r\}$ is a trivial solution to any hitting set problem,
    since the paths in solutions to problems $\PPP_i$ are required to use interface vertices.
    Thus, we only consider hitting set problems with~$\ell \leq r$ and~$k \le 2r$.

  \item[Type] Finally, we define $\type(G,\tup u)$ to be
    the set of all $\ell$-hitting set of $k$-path system problems which do have a solution in~$G$,
    for all~$\ell \leq r$ and~$k \le 2r$.
\end{description}

Let us now prove the two key properties of types.
\finiteness*
\begin{proof}
  The number of distinct path constraints is bounded by~$\alpha\coloneqq\Abs{\Gamma}\cdot(r+2)^2 \cdot 2^r$. Next,
  the number of distinct $k$-path system problems with $k \le 2r$ is bounded by $\beta\coloneqq\sum_{k=0}^{2r} \alpha^k\cdot 2^{\binom{k}{2}}\leq 2^{\Oh(r^2)}\cdot \Abs{\Gamma}^{\Oh(r)}$. Finally, the number of distinct $\ell$-hitting set of $k$-path system problems with $\ell \leq k$ and $k \le 2r$ is bounded by $(2r)^2\cdot 2^\beta$. Hence, the number of distinct types is bounded by a triple-exponential function of~$\Abs{\Gamma}$ and~$r$.
\end{proof}

{%
	\renewcommand*{\footnote}[1]{}
\compositionality*
}%
\begin{proof}
  We first focus on proving the equality $\pack(G)=\pack(G')$.
  Consider a family of~$s$ non-null \ST paths $P_1,\dots,P_s$ with congestion~2 in~$G$, for some $s\in \N$. Suppose $k$ of those paths intersect the interface $\{u_1,\ldots,u_r\}$. By renaming the paths if necessary, we may assume that the paths $P_1,\ldots,P_k$ intersect the interface, while paths $P_{k+1},\ldots,P_s$ do not. Since $G[B\setminus A]$ does not contain any non-null \ST path, all paths $P_{k+1},\ldots,P_s$ are entirely contained in $G[A\setminus B]$. Note that since the family $P_1,\ldots,P_s$ has congestion~$2$, we have $k\leq 2r$.

  We split each~$P_i$, $i\in \{1,\ldots,k\}$, into {\em{pieces}} (subpaths)~$P_{i,1},\dots,P_{i,t_i}$ contained alternately in~$G[A]$ and~$G[B]$ so that the endpoints of each piece are the interface vertices,
  except possibly for the starting vertex of~$P_{i,1}$ in~$S$ and the ending vertex of~$P_{i,t_i}$ in~$T$.
  Note that as a corner case, this decomposition may consist of a single piece~$P_i = P_{i,1}$ entirely contained in~$G[A]$ or~$G[B]$,
  but then this piece must still pass through at least one interface vertex, because we assumed that $P_i$ intersects the interface.
  Therefore, every piece~$P_{i,j}$ intersects the interface,
  and thus corresponds to some valid path constraint~$C_{i,j}$.

  Consider then the path system problem consisting of all the path constraints~$C_{i,j}$ constructed for pieces $P_{i,j}$ that are contained in $G[B]$,
  with the requirement that the paths for~$C_{i,j}$ and~$C_{i,j'}$ be disjoint for all distinct $j,j'\in \{1,\ldots,t_i\}$
  (that is, different pieces of the same path may not intersect).
  This is a valid path system problem~$\PPP$,
  and the fact that it has a solution is encoded in~$\type(G[B],\tup u)$.
  Indeed,~$\PPP$ having a solution is the same as $G$ not admitting any hitting set of size $0$ for~$\PPP$.
  Since~$G'[B']$ has the same type as $G[B]$, it also has a solution to~$\PPP$,
  consisting of paths~$P'_{i,j}$ that satisfy constraints~$C_{i,j}$, respectively.

  We now reconstruct the \ST paths~$P'_1,\ldots,P'_k$ in~$G'$ by replacing any piece~$P_{i,j}$ contained in~$G[B]$ by the piece~$P'_{i,j}$ in $G'[B']$.
  Let us check that~$P'_1,\dots,P'_k,P_{k+1},\ldots,P_s$ is a family of non-null \ST paths in $G'$ with congestion~2.
  Firstly, since the pieces~$P'_{i,j}$ are vertex-disjoint for every fixed~$i\in \{1,\ldots,k\}$
  and pass through exactly the same interface vertices as~$P_{i,j}$,
  they correctly reassemble into an \ST path~$P'_i$ without creating loops.
  Also, $P_{i,j}$ and~$P'_{i,j}$ have exactly the same values, so~$P'_i$ gets the same value as $P_i$ and hence is non-null.
  Finally, each vertex of~$G'$ is used by at most two paths:
  for vertices in~$A \setminus B$ this is by assumption on the initial path family~$P_1,\dots,P_s$;
  for vertices in~$B \setminus A$ this is an explicit requirement for solutions of path system problems;
  and for the interface vertices, this is ensured by the fact that every piece~$P'_{i,j}$
  goes through exactly the same interface vertices as~$P_{i,j}$.

  Thus, if~$G$ contains a family of~$s$ non-null \ST paths with congestion $2$, then so does~$G'$. The proof is of course symmetric, hence the same claim also holds for $G$ and $G'$ swapped. We conclude that $\pack(G)=\pack(G')$, as required.

  Let us now adapt the argument to prove that also $\hit(G)=\hit(G')$.
  Consider a set of vertices $X\subseteq V(G)$ of minimum size that meets every non-null \ST paths in~$G$. Denote~$X_B = X \setminus A$ and observe that $|X_B|\leq r$, because otherwise replacing $X_B$ with the interface $\{u_1,\ldots,u_r\}$ in $X$ would yield a set $Y$ with $|Y|<|X|$ that would still meet all non-null \ST paths in $G$, a contradiction to the minimality of $X$.

  Let~$\mathfrak{P}$ be the set of all path system problems whose solutions in~$G[B]$ must intersect~$X_B$; that is,~$\mathfrak{P}$ is the inclusion-wise maximal hitting set problem with solution~$X_B$.
  Since $|X_B|\leq r$ and~$G'[B']$ has the same type as~$G[B]$, we conclude that in $G'[B']$ there exists a solution~$X'_B$ for~$\mathfrak{P}$ with~$\Abs{X'_B} \leq \Abs{X_B}$.

  Define~$X'=(X\setminus X_B)\cup X'_B$ and note that $\Abs{X'} \leq \Abs{X}$. We claim that~$X'$ meets all non-null \ST paths in~$G'$.
  Indeed, suppose for a contradiction that~$P'$ is a non-null \ST path in~$G'$ disjoint from~$X'$.
  As previously, we split~$P'$ into pieces~$P'_1,\dots,P'_t$ alternately contained in~$G'[A]$ and~$G'[B']$, so that the endpoints of each piece are interface vertices, except possibly for the starting vertex of $P'_1$ in $S$ and the ending vertex of $P'_t$ in $T$. Consider the path system problem~$\PPP$ describing the pieces contained in~$G'[B']$ (in $\PPP$, all pieces are required to be pairwise disjoint).
  The pieces $P'_1,\ldots,P'_t$ themselves are a solution to~$\PPP$ in~$G'[B']$ disjoint from~$X'_B$,
  hence~$\PPP$ is not in~$\mathfrak{P}$.
  By the choice of~$\mathfrak{P}$, this implies that~$\PPP$ also has a solution in~$G[B]$ disjoint from~$X_B$.
  The same replacement arguments as in the first half of the proof
  transform~$P'$ into a non-null \ST path~$P$ in~$G$ disjoint from~$X$, a contradiction.

  In summary, we constructed a vertex subset $X'\subseteq V(G')$ with $|X'|\leq |X|$ that meets all non-null \ST paths in $G'$. This shows that $\hit(G')\leq \hit(G)$, and a symmetric argument proves that $\hit(G)\leq \hit(G')$ as well. Hence $\hit(G)=\hit(G')$, as required.
\end{proof}

\section{Proof of the main result}\label{sec:main}

We are ready to assemble all the prepared tools and prove our main result, which we recall for convenience.

\mainTheorem*
\begin{proof}
  Assume that~$G$ does not contain a family of~$k$ non-null \ST paths with congestion~2. Our goal is to provide an upper bound on $\hit(G)$, the minimum size of a vertex subset that meets all non-null \ST paths, expressed as a function $f(k)$. We do this by induction first on~$k$, and then on the vertex count of~$G$. 

  Let~$q=h+1$, where $h=h(k)$ is the integer provided by \cref{cor:type-bounded}
  for the group~$\Gamma$ and $r=k-1$.
  If the graph~$G$ is $(q,k)$-unbreakable, we conclude immediately by \cref{thm:unbreakable-EP}, assuming we eventually~have \[f(k)\geq 4q+2k-6=4h(k)+2k-2.\]
  Thus, we now assume that there is a separation~$(A,B)$ witnessing that $G$ is not $(q,k)$-unbreakable: ~$\Abs{A \cap B} < k$ and~$\Abs{A},\Abs{B} \ge q$.
  There are two cases to consider.

  Suppose first that there is a non-null \ST path~$P_A$ entirely contained in~$G[A \setminus B]$ as well as a non-null \ST path~$P_B$ entirely contained in~$B \setminus A$. Then~$G[B \setminus A]$ cannot contain a family of $k-1$ non-null \ST paths with congestion~2,
      because then adding~$P_A$ would yield such a family of size~$k$ in $G$, a contradiction.\footnote{%
        Actually, we can even claim a bound of $k-2$ instead of $k-1$, since congestion~2 allows using~$P_A$ twice.
      }
      Therefore, by induction, in $G[B\setminus A]$ there is vertex subset~$X_B$ of size at most $f(k-1)$ that meets all non-null \ST paths in~$G[B \setminus A]$.
      Similarly, in $G[A\setminus B]$ there is a vertex subset $X_A$ of size at most $f(k-1)$ that meets all non-null \ST paths in~$G[A \setminus B]$.
      Then, the set $X=(A \cap B) \cup X_A \cup X_B$ meets all non-null \ST paths in~$G$, and its size is bounded by $k-1+2f(k-1)$. Therefore, provided we eventually have
      \[f(k)\geq k-1+2f(k-1),\]
      we are done in this case.

  Otherwise, without loss of generality we may assume that there are no non-null \ST paths in~$G[B \setminus A]$.
  Enumerate the interface vertices of $A \cap B$ as $\tup u=(u_1,\ldots,u_r)$ with~$r < k$, and consider the type $\tau = \type(G[B],\tup u)$, where we naturally consider $G[B]$ to be a $\Gamma$-labelled \STgraph.
  Note that $G[B]$ with interface $\tup u$ is safe.
      By the choice of~$q$, there exists a $\Gamma$-labelled \STgraph $\widehat{G}$ with interface $\tup u$ that is also safe, the type of $\widehat{G}$ is also $\tau$, and $\widehat{G}$ has fewer than~$q$ vertices.
      We obtain a new graph $G'$ from $G$ by replacing~$G[B]$ with $\widehat{G}$ in the expected way. Thus $G'$ has fewer vertices than $G$, while from
      \cref{lem:type-compositional} we infer that $\pack(G')=\pack(G)$ and $\hit(G')=\hit(G)$. By induction on the vertex count of the graph, we conclude that
      \[\hit(G)=\hit(G')\leq f(\pack(G'))=f(\pack(G)),\]
      as required.

    From the discussion above we conclude that the induction argument works if we define function $f$ recursively as follows:
    \begin{align*}
     f(0) & = 0,\\
     f(k) & = \max\left(4h(k)+2k-2,k-1+2f(k-1)\right)\quad\textrm{for }k\geq 1.\qquad\qedhere
    \end{align*}
\end{proof}

\section{Conclusions}\label{sec:conclusions}

We conclude by listing a few open questions.
\begin{itemize}[nosep]
 \item Due to the non-constructive argument used in \cref{cor:type-bounded}, our proof of \cref{thm:main} does not yield any explicit bound on the obtained function $f$. In fact, it is even unclear whether $f$ is computable. Possibly, a more direct approach could provide explicit bounds on the asymptotics of $f$.
 \item In \cref{thm:main}, the obtained function $f$ depends on the group $\Gamma$, and in particular we need to assume that $\Gamma$ is finite. Many other results on \EP properties in group-labelled graphs, particularly \cref{thm:non-null-S-paths-EP}, avoid this dependence, and provide a bound on $f$ that is independent of $\Gamma$. It would be interesting to investigate whether this dependence can be also avoided in the context of~\cref{thm:main}.
 \item The main question considered in this work can be also asked in the setting of directed graphs. Here, we consider directed $S$-to-$T$ paths and the value of a path is the product of the labels along it. Thus, every edge can be traversed in only one direction. We conjecture that in this setting, the answer is strongly negative, even for odd paths: for every $c\in \N$, odd $S$-to-$T$ paths in directed graphs do not enjoy the $\frac{1}{c}$-integral Erd\H{o}s-P\'osa property (that is, in packings we allow congestion $c$). However, we were unable to construct an example confirming this conjecture even in the setting where we may replace $\mathbb{Z}/2\mathbb{Z}$ with any finite group.
\end{itemize}

\bibliographystyle{plain}
\bibliography{biblio}
\end{document}